\documentclass{amsart}

\newtheorem{theorem}{Theorem}[section]
\newtheorem{lemma}[theorem]{Lemma}

\theoremstyle{definition}

\theoremstyle{corollary}

\theoremstyle{remark}
\newtheorem{remark}[theorem]{Remark}

\numberwithin{equation}{section}
\begin{document}

\title{Global well-posedness for the 2D Boussinesq Equations with Zero Viscosity}

\author{Daoguo Zhou}
%    Address of record for the research reported here
\address{College of Mathematics and Informatics, Henan Polytechnic University, Jiaozuo, Henan 454000, China}
\email{daoguozhou@hpu.edu.cn}

%    Information for first author
\author{Zilai Li}
%    Address of record for the research reported here
\address{College of Mathematics and Informatics, Henan Polytechnic University, Jiaozuo, Henan 454000, China}
\email{lizl@hpu.edu.cn}
%    \thanks will become a 1st page footnote.

%    General info
\subjclass[2010]{35Q35; 76D05.}

\keywords{Global existence, Boussinesq equations, Zero viscosity, bounded domain, }

\begin{abstract}
We prove the global well-posedness of the two-dimensional Boussinesq equations
with zero viscosity and positive diffusivity in bounded domains for rough initial data [ $u_{0}\in L^{2}$, $\text{curl}\,u_{0}\in L^{\infty}$ and $\theta_{0}\in B^{2-2/p}_{q,p}$ with $p\in (1,\infty)$, $q\in (2,\infty)$ ]. Our method is based on the maximal regularity for heat equation.
\end{abstract}

\maketitle

\section{Introduction}
In the present paper, we investigate  the global  well-posedness of the 2D Boussinesq equations with zero viscosity and positive diffusivity in a smooth bounded domain $\Omega\subset \mathbb{R}^2$ with smooth boundary $\partial \Omega$. The corresponding system reads
\begin{equation}\label{BOUo}
\begin{cases}
\partial_t u+ u\cdot\nabla u +\nabla p =  \theta e_2,   \\
\partial_t \theta+ u\cdot\nabla \theta-\kappa\Delta \theta = 0,   \\
\nabla \cdot u =0,
\end{cases}
\end{equation}
where $u$ is the velocity vector field, $p$ is the pressure,  $\theta$ is the temperature, $\kappa>0$ is the thermal diffusivity, and $e_2=(0,1)$. We supplement the system (1.1) with the following initial boundary value conditions
\begin{equation}\label{IBo}
\begin{cases}
(u,\theta)(x,0)=(u_{0},\theta_{0})(x),\;x\in\Omega,\\
u(x,t)\cdot n|_{\partial\Omega}=0, \theta(x,t)|_{\partial\Omega}=\bar \theta,
\end{cases}
\end{equation}
where $n$ is the outward unit normal vector to $\partial \Omega$, and $\bar \theta$ is a constant.

The general 2D Boussinesq equations with viscosity $\nu$ and diffusivity $\kappa$ are
\begin{equation*}\label{GBOU}
\begin{cases}
\partial_t u+ u\cdot\nabla u -\nu\Delta u+\nabla p  =  \theta,   \\
\partial_t \theta+ u\cdot\nabla \theta-\kappa\Delta \theta = 0,   \\
\nabla \cdot u =0.
\end{cases}
\end{equation*}
The Boussinesq equations are of relevance to study a number of models coming from
atmospheric or oceanographic turbulence where rotation and stratification play an important role (see \cite{Ma}, \cite{Pe}). From the mathematical view, the
2D Boussinesq equations serve as a simplified model of the 3D Euler and Navier-
Stokes equations (see \cite{Ma2}). Better understanding of the 2D Boussinesq equations will shed
light on the understanding of 3D flows.

Recently, there are many works devoted to the well-posedness of the 2D Boussinesq equations, see \cite{Abidi}-\cite{Huang}, \cite{Xu}-\cite{Zhou}. In particular, when $\Omega=\mathbb{R}^2$, Chae in \cite{Chae} showed that the
system \eqref{BOUo}-\eqref{IBo} has a  global smooth solution for $(u_0,\theta_0)\in H^3$. In the bounded domains case,  the boundary effect requires a careful mathematical analysis. In this direction, Zhao in \cite{Zhao} was able to generalize the study of \cite{Chae} to smooth bounded domains. This result was later extended by Huang in \cite{Huang} to the case of Yudovich's type data: $\text{curl}\,u_0\in L^\infty$ and $\theta_0\in H^2$. We intend here to improve Huang's result further by lowering the regularity for initial data.

Our main result is stated in the following theorem.
\begin{theorem}\label{Th}
Let $\Omega$ be a bounded domain in $\mathbb{R}^2$ with $C^{2+\epsilon}$ boundary for some $\epsilon>0$. Suppose that $u_0\in L^2$, $\text{curl}\, u_0\in L^\infty$, and $\theta_0\in B^{2-2/p}_{q,p}$ with $p\in(1,\infty)$, $q\in(2,\infty)$. Then there exists a unique global solution $(u,\theta)$ to the system \eqref{BOU}-\eqref{IB}, which satisfies that for all $T>0$
\begin{gather*}
\theta\in C([0,T];B_{q,p}^{2-2/p})\cap L^{p}(0,T;W^{2,q})\,,
 \partial_t\theta \in  L^{p}(0,T;L^{q}),\\
u \in L^\infty(0,T;L^2) \text{ and }\text{curl}\, u\in L^\infty(0,T;L^\infty).
\end{gather*}
\end{theorem}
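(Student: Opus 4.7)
The plan is to build a smooth approximate problem (regularize the initial data and add a small viscosity $\nu\Delta u$ that is subsequently sent to zero), derive a priori bounds uniform in the approximation parameters on any fixed interval $[0,T]$, extract a limit by Aubin--Lions compactness, and establish uniqueness separately. The three workhorses are the $L^2$ energy identity for $u$, the two-dimensional vorticity equation
$$\partial_t\omega + u\cdot\nabla\omega = \partial_1\theta, \qquad \omega := \mathrm{curl}\,u,$$
and the maximal $L^p$--$L^q$ regularity of the heat equation applied to the $\theta$-equation with forcing $-u\cdot\nabla\theta$ and initial datum in $B^{2-2/p}_{q,p}$.

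The a priori estimates close through the following loop. The vorticity transport equation yields
$$\|\omega(t)\|_{L^\infty} \le \|\omega_0\|_{L^\infty} + \int_0^t \|\nabla\theta(s)\|_{L^\infty}\,ds,$$
while maximal regularity for the temperature equation gives
$$\|\theta\|_{L^p(0,T;W^{2,q})} + \|\partial_t\theta\|_{L^p(0,T;L^q)} \le C\bigl(\|\theta_0\|_{B^{2-2/p}_{q,p}} + \|u\cdot\nabla\theta\|_{L^p(0,T;L^q)}\bigr).$$
The key point is that $q>2$ forces the embedding $W^{2,q}(\Omega)\hookrightarrow W^{1,\infty}(\Omega)$, so any bound on $\theta$ in $L^p(W^{2,q})$ automatically controls $\nabla\theta$ in $L^p(L^\infty)$ and, by H\"older in time, in $L^1(L^\infty)$, which is exactly what the vorticity inequality consumes. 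In the other direction, Biot--Savart / elliptic regularity in the bounded smooth domain, combined with the slip condition $u\cdot n=0$ and the $L^2$ energy bound, produces $\|u(t)\|_{L^\infty} \le C(\|u_0\|_{L^2}+\|\omega(t)\|_{L^\infty})$, hence
$$\|u\cdot\nabla\theta\|_{L^p(0,\tau;L^q)} \le \|u\|_{L^\infty(0,\tau;L^\infty)}\,\|\nabla\theta\|_{L^p(0,\tau;L^q)}.$$
Setting $X(\tau):=\|\theta\|_{L^p(0,\tau;W^{2,q})}$ and $Y(\tau):=\|\omega\|_{L^\infty(0,\tau;L^\infty)}$, I would insert the elliptic bound into the maximal regularity inequality and couple it with the vorticity estimate, producing a system in $(X,Y)$ in which the cross terms carry a small prefactor $\tau^{1/p'}$. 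On a short first slab $[0,\tau_1]$ the system closes by absorption, and since $Y$ can grow at most linearly in time, iterating over finitely many slabs of comparable length covers any prescribed $T$.

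The uniform estimates then pass through a standard Aubin--Lions compactness argument to yield a limiting solution in the claimed regularity class; uniqueness will be argued by an $L^2$-difference estimate for the velocity coupled with a $W^{2,q}$-type difference estimate for the temperature, with the borderline log-Lipschitz nature of $u$ (inherited from $\mathrm{curl}\,u\in L^\infty$) forcing the final Gr\"onwall step to be replaced by an Osgood-type argument. The hardest part of the whole scheme is precisely the circular closure just described: neither the vorticity estimate nor the maximal regularity estimate can be closed in isolation, and the closure is only possible because $q>2$ makes $W^{2,q}\hookrightarrow W^{1,\infty}$ and because elliptic regularity promotes $\omega\in L^\infty$ to $u\in L^\infty$ in a smooth bounded 2D domain. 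The main technical care will be to verify that the constant in the maximal-regularity inequality is independent of the slab position, so that the slab-by-slab iteration does not accumulate any unwanted exponential growth in $T$, and that the compactness step preserves the $B^{2-2/p}_{q,p}$ regularity of $\theta$ in the limit.
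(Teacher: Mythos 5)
Your overall architecture (energy estimates, vorticity transport, maximal $L^p$--$L^q$ regularity for the temperature, Yudovich-type uniqueness) matches the paper's, but the crucial closure of the a priori estimates is done along a genuinely different --- and riskier --- route. You split the nonlinearity as $\|u\cdot\nabla\theta\|_{L^q}\le\|u\|_{L^\infty}\|\nabla\theta\|_{L^q}$, which forces you to control $\|u\|_{L^\infty}$ through $\|\omega\|_{L^\infty}$, which in turn is fed by $\|\nabla\theta\|_{L^1(0,T;L^\infty)}$: this is the circular loop you acknowledge, and you propose to break it by short-time absorption on slabs. The paper avoids the loop entirely by the opposite H\"older split, $\|u\cdot\nabla\theta\|_{L^q}\le\|u\|_{L^q}\|\nabla\theta\|_{L^\infty}$. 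The point is that $u\in L^\infty(0,T;L^q)$ for every $q<\infty$ is already available \emph{before} any information on $\|\omega\|_{L^\infty}$: the basic energy estimate gives $\nabla\Theta\in L^2(0,T;L^2)$, hence $\omega\in L^\infty(0,T;L^2)$ from the vorticity equation, hence $\nabla u\in L^\infty(0,T;L^2)$ by the Calder\'on--Zygmund/Yudovich bound and $u\in L^\infty(0,T;L^q)$ by Sobolev embedding. Then the Gagliardo--Nirenberg inequality $\|\nabla\theta\|_{L^\infty}\le\epsilon\|\nabla^2\theta\|_{L^q}+C(\epsilon)\|\theta\|_{L^2}$ lets the dangerous term be absorbed into the left-hand side of the maximal regularity inequality \emph{globally on $[0,T]$}, with no iteration in time; the bound $\omega\in L^\infty(0,T;L^\infty)$ is then harvested at the very end from $\nabla\theta\in L^1(0,T;L^\infty)$.

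The weak link in your scheme is the sentence ``since $Y$ can grow at most linearly in time, iterating over finitely many slabs of comparable length covers any prescribed $T$.'' The linear-in-time growth of $Y=\|\omega\|_{L^\infty_t(L^\infty)}$ presupposes a bound on $\|\nabla\theta\|_{L^1_t(L^\infty)}$, which is exactly what the slab argument is supposed to produce; moreover the admissible slab length at step $k$ depends on $Y$ and on $\|\theta\|_{B^{2-2/p}_{q,p}}$ at the left endpoint of the slab, both of which may grow from slab to slab, so the slabs are not a priori of comparable length. You would need a quantitative continuation argument showing that the slab lengths do not sum to something less than $T$, or a single differential inequality amenable to Gr\"onwall. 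This is repairable but, as written, a gap --- and the paper's one-shot absorption makes it unnecessary. Two smaller points: to extract the factor $\tau^{1/p'}$ on the cross term you must first interpolate $\|\nabla\theta\|_{L^q}$ between $\|\theta\|_{W^{2,q}}$ and the conserved $\|\theta\|_{L^2}$ before applying H\"older in time, since $\|\nabla\theta\|_{L^p(0,\tau;L^q)}$ by itself carries no smallness; and your construction of solutions by vanishing viscosity differs from the paper, which starts from an off-the-shelf global weak solution (citing Zhao) and bootstraps its regularity --- either is acceptable.
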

\begin{remark}
We only require mild regularity for the initial temperature
$\theta_0$, as the ``regularity index'' $2-2/p$ can be arbitrarily
closed to zero. Thus, our result significantly improves the previous results  \cite{Chae,Huang,Zhao}.
\end{remark}
\begin{remark}
By modifying slightly the method in the current paper, we can prove the global well-posedness for initial data $(u_0,\theta_0)\in H^{2+s}\times H^s$ with $s>0$ or  $(u_0,\theta_0)\in W^{2,q}\times W^{1,q}$ with $q>2$.
\end{remark}

The proof of Theorem \ref{Th} consists of two main steps. First, we show the global
existence of weak solutions to \eqref{BOUo}-\eqref{IBo}. Then we improve the regularity of weak solutions using the maximal regularity  for heat equation. Our proof is elementary and can be carried over to the case of $\mathbb{R}^2$ without difficulty.

The rest of our paper is organized as follows. In Section 2, we recall maximal regularity for heat equations as well as some basic facts.  Section 3 is devoted to the proof of our main theorem.
\section{Notations and Preliminaries}
\noindent{\bf
Notations:}

(1)Let $\Omega$ be a bounded domain in $\mathbb{R}^{2}$. For $p\ge 1$ and  $k\ge 1$, $L^p(\Omega)$ and $W^{k,p}(\Omega)$ ($p=2$, $H^k(\Omega)$)
denote the standard Lesbegue space and Sobolev space respectively. For $T>0$ and
a function space $X$, denote by $L^p(0,T; X)$ the set of Bochner measurable $X$-valued time dependent
functions $f$ such that $t\to \|f\|_X$ belongs to $L^p(0,T)$.

(2)Let $s\in (0,\infty)$, $p\in (1, \infty)$ and $r\in [1, \infty]$. The Besov space $B_{p,r}^{s}(\Omega)$ is defined as the real interpolation space between $L^p(\Omega)$ and $W^{m,p}(\Omega)$ ($m>s$),
\[B_{p,r}^{s}(\Omega)=(L^p(\Omega),W^{m,p}(\Omega))_{\frac s m,r}.\]
See Adams and Fournier (\cite{AD}, Chapter 7).

(3)Throughout this paper, the same letter $C$ denotes various generic positive constant which is dependent on initial data $(u_0,\theta_0)$, time $T$, the thermal diffusivity  $\kappa$, and the domain $\Omega$.

We need the well-known Sobolev Embeddings and Gagliardo-Nirenberg inequality (see Adams and Fournier \cite{AD} and  Nirenberg \cite{Niren}).

\begin{lemma}\label{embed}
	Let $\Omega\in \mathbb{R}^{2}$ be any bounded domain with $C^2$ boundary. Then
	the following embeddings and inequalities hold:
	\begin{enumerate}
		\item $ H^1(\Omega)\hookrightarrow L^q(\Omega)$, for all $q\in (1,\infty)$.
		\item $\|\nabla u\|_{L^\infty}\leq C \|\nabla^2u\|_{L^q}^{\alpha}\|u\|_{L^2}^{1-\alpha}+C\|u\|_{L^2}$, for all $u\in W^{2,q}(\Omega)$, with $q\in (2,\infty)$, $\alpha=\frac {2q} {3q-2}$ and $C$ is a constant depending on $q,\Omega$.
	\end{enumerate}
\end{lemma}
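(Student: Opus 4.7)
The plan is to prove both statements by reducing to the standard Euclidean Gagliardo--Nirenberg and Sobolev theory via extension. Since $\partial\Omega$ is $C^{2}$, a Calder\'on-type construction gives a bounded linear extension operator $E\colon W^{k,p}(\Omega)\to W^{k,p}(\mathbb{R}^{2})$ simultaneously for $k=0,1,2$ and $p\in(1,\infty)$, so every inequality reduces to its whole-space analogue at the cost of a constant depending only on $\Omega$.

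For part (1), apply $E$ to $u\in H^{1}(\Omega)$ and recall that in two dimensions the Gagliardo--Nirenberg--Sobolev inequality yields $H^{1}(\mathbb{R}^{2})\hookrightarrow L^{q}(\mathbb{R}^{2})$ for every $q\in[2,\infty)$, since the critical Sobolev exponent for $H^{1}$ in dimension two is $\infty$ and every finite $q\ge 2$ is subcritical. Restricting $Eu$ back to $\Omega$ yields the embedding for $q\in[2,\infty)$; for $q\in(1,2)$ it then follows from H\"older's inequality and $|\Omega|<\infty$.

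For part (2), I would first verify scaling consistency on $\mathbb{R}^{2}$ with $u_{\lambda}(x)=u(\lambda x)$: the identities $\|\nabla u_{\lambda}\|_{L^{\infty}}=\lambda\|\nabla u\|_{L^{\infty}}$, $\|\nabla^{2}u_{\lambda}\|_{L^{q}}=\lambda^{2-2/q}\|\nabla^{2}u\|_{L^{q}}$, and $\|u_{\lambda}\|_{L^{2}}=\lambda^{-1}\|u\|_{L^{2}}$, combined with the requirement $1=\alpha(2-2/q)+(1-\alpha)(-1)$, force $\alpha=\frac{2q}{3q-2}$, matching the stated exponent. The homogeneous inequality on $\mathbb{R}^{2}$ is the classical Nirenberg inequality; one clean proof uses a Littlewood--Paley splitting $\nabla v=\sum_{j}\nabla\Delta_{j}v$, bounding low frequencies via Bernstein in terms of $\|v\|_{L^{2}}$ and high frequencies via the Morrey embedding $W^{2,q}\hookrightarrow C^{1,1-2/q}$ (valid since $q>2$), then optimising over the cutoff frequency. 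Applying this to $v=Eu$ and restricting to $\Omega$ gives a bound in terms of $\|\nabla^{2}Eu\|_{L^{q}(\mathbb{R}^{2})}$ and $\|Eu\|_{L^{2}(\mathbb{R}^{2})}$; continuity of $E$ converts these into $\|u\|_{W^{2,q}(\Omega)}$ and $\|u\|_{L^{2}(\Omega)}$, and the intermediate lower-order pieces $\|\nabla u\|_{L^{q}}$ and $\|u\|_{L^{q}}$ are absorbed via an iterated application of the same Nirenberg interpolation together with Young's inequality, leaving only the additive correction $C\|u\|_{L^{2}}$. The main technical obstacle is precisely this bookkeeping of lower-order terms and verifying that they collapse into the single $L^{2}$ correction; this is where the smoothness of $\partial\Omega$ enters, through the boundedness of $E$ on the relevant scale.
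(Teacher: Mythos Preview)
Your argument is essentially correct: the extension-operator reduction to $\mathbb{R}^{2}$, the two-dimensional Sobolev embedding for part (1), the scaling computation $\alpha(2-2/q)-(1-\alpha)=1$ giving $\alpha=\tfrac{2q}{3q-2}$, and the Littlewood--Paley/Nirenberg interpolation for part (2) all go through, and your identification of the lower-order bookkeeping as the only real technical point is accurate.

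However, there is nothing to compare against: the paper does not prove this lemma at all. It simply records the statement and cites Adams--Fournier and Nirenberg for the proofs. So your write-up is not an alternative to the paper's argument but rather a sketch of the standard proof that the paper takes for granted. If your goal is to match the paper, a one-line citation suffices; if your goal is to supply a self-contained argument, what you have is fine, though in that case you might as well cite Nirenberg's original interpolation theorem directly rather than redo the Littlewood--Paley estimate, since the exponent $\alpha=\tfrac{2q}{3q-2}$ is exactly the Gagliardo--Nirenberg value for $j=1$, $m=2$, $p=\infty$, $r=q$, $s=2$, $n=2$.
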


We also need the maximal regularity for heat equation (see Amann \cite{Amann}),
which is critical to the proof of our main theorem.
\begin{lemma}\label{le:2.2}
Let $\Omega$ be a bounded domain with a $C^{2+\epsilon}$ boundary in $\mathbb{R}^2$ and $1 < p, q <\infty$.
Assume that $u_0\in B_{q,p}^{2-2/p}$, $f \in L^p(0,\infty;L^q)$. Then the system
\begin{equation}
\begin{cases}
\partial_t u-\kappa\Delta u=f,\\
u(x,t)|_{\partial \Omega}=0,\\
u(x,t)|_{t=0}=u_0,
\end{cases}
\end{equation}
has a unique solution $u$ satisfying the following inequality for all $T > 0$:
\begin{align*}
&\|u(T)\|_{B_{q,p}^{2-2/p}}+\|u\|_{L^p(0,T; W^{2,q})}+\|\partial_t u\|_{L^p(0,T; L^q)}\\
&\leq C\left(\|u_0\|_{B_{q,p}^{2-2/p}}+\|f\|_{L^p(0,T; L^q)}\right),
\end{align*}
with $C=C(p,q,\kappa,\Omega)$.
\end{lemma}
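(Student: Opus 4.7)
The plan is to reduce the estimate to the abstract theory of maximal $L^p$-regularity for the Dirichlet Laplacian. Define $A := -\kappa\Delta$ on $L^q(\Omega)$ with domain $D(A) = W^{2,q}(\Omega)\cap W_0^{1,q}(\Omega)$; since $\partial\Omega$ is of class $C^{2+\epsilon}$, classical elliptic theory shows that $A$ is sectorial of angle less than $\pi/2$ and generates a bounded analytic semigroup $\{e^{-tA}\}_{t\ge 0}$ on $L^q$. Existence and uniqueness of the mild solution
\begin{equation*}
u(t) = e^{-tA}u_0 + \int_0^{t} e^{-(t-s)A}f(s)\,ds
\end{equation*}
then follow from standard semigroup theory, so the task reduces to bounding $\nabla^2 u$ and $\partial_t u$ in $L^p(0,T;L^q)$ and $u(T)$ in $B_{q,p}^{2-2/p}$.

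The inhomogeneous contribution is controlled by the maximal $L^p$--$L^q$ regularity of $A$: the convolution map $f\mapsto\int_0^{\cdot} e^{-(\cdot-s)A}f(s)\,ds$ is bounded from $L^p(0,\infty;L^q)$ into $L^p(0,\infty;D(A))\cap W^{1,p}(0,\infty;L^q)$. This is the deepest technical step and, in my view, the principal obstacle. One would establish it either via the Dore--Venni theorem, by verifying that $A$ admits bounded imaginary powers with power angle strictly less than $\pi/2$, or via Weis's operator-valued Fourier multiplier theorem, by verifying $R$-sectoriality of $A$ on a sector strictly containing the positive real axis. On smooth bounded domains both criteria are known to hold for the Dirichlet Laplacian, and this is precisely the content of the theorem of Amann cited in the statement.

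The homogeneous contribution is then handled by real interpolation. The trace space at $t=0$ for functions in $L^p(0,T;D(A))\cap W^{1,p}(0,T;L^q)$ coincides with the real interpolation space $(L^q,D(A))_{1-1/p,p}$, and since $\partial\Omega$ is smooth this is identified with $B_{q,p}^{2-2/p}(\Omega)$, with the homogeneous Dirichlet trace incorporated when $2-2/p>1/q$. Analyticity of $\{e^{-tA}\}_{t\ge 0}$ on this interpolation space yields
\begin{equation*}
\|e^{-\cdot A}u_0\|_{L^p(0,T;W^{2,q})} + \|\partial_t e^{-\cdot A}u_0\|_{L^p(0,T;L^q)} \le C\|u_0\|_{B_{q,p}^{2-2/p}}.
\end{equation*}
Summing this with the maximal regularity bound for the inhomogeneous term, and invoking the trace embedding $L^p(0,T;W^{2,q})\cap W^{1,p}(0,T;L^q)\hookrightarrow C([0,T];B_{q,p}^{2-2/p})$ to control $\|u(T)\|_{B_{q,p}^{2-2/p}}$, produces the claimed inequality. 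Uniqueness is immediate from linearity together with the standard $L^q$ energy estimate applied to the difference of two solutions.
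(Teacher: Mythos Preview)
Your sketch is correct and follows the standard route to maximal $L^p$-$L^q$ regularity for the Dirichlet heat equation: sectoriality of $-\kappa\Delta$ on $L^q$, maximal regularity for the inhomogeneous part via bounded imaginary powers or $R$-sectoriality, and real interpolation to identify the trace space as $B_{q,p}^{2-2/p}$. However, the paper does not actually prove this lemma; it is stated as a known result with a reference to Amann's monograph \emph{Linear and Quasilinear Parabolic Problems, Vol.~I}. So there is no ``paper's own proof'' to compare against---the authors simply import the result as a black box, and your outline is essentially a summary of the argument one would find in that reference or in the surrounding literature (Dore--Venni, Weis, Denk--Hieber--Pr\"uss).
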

We complete this section by recalling the following well-known inequality (see Yudovich \cite{Yudovich}), which will be used several times.
\begin{lemma}\label{le:2.4}
For any $p\in (1,\infty)$, the following estimate holds
\[\|\nabla u\|_{L^p(\Omega)}\leq C \frac{p^2}{p-1}\|w\|_{L^p(\Omega)},\]
where $C=C(\Omega)$ does not depend on $p$.
\end{lemma}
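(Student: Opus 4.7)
The plan is to recover $u$ from $w=\mathrm{curl}\,u$ via a stream function and reduce the inequality to a Calderón--Zygmund $L^p$ bound with explicitly tracked $p$-dependence. Using $\nabla\cdot u=0$ and $u\cdot n|_{\partial\Omega}=0$, I write $u=\nabla^\perp\psi$ (modulo a finite-dimensional harmonic corrector in the multiply connected case, which is smooth and absorbed into the constant) where $-\Delta\psi=w$ with $\psi|_{\partial\Omega}=0$. Via the Dirichlet Green's function $G$ for $-\Delta$ on $\Omega$,
\[
\psi(x)=\int_\Omega G(x,y)\,w(y)\,dy,
\]
so that $\nabla u$ is controlled by $\nabla^2\psi=Tw+Rw$, where $T$ has a Calderón--Zygmund kernel of type $|x-y|^{-2}$ with the usual mean-zero cancellation (the principal part $\partial^2_{xy}$ of the Newtonian piece of $G$) and $R$ is a smoothing remainder produced by the boundary reflection in $G$, uniformly bounded on every $L^p(\Omega)$.

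The core of the argument is the $p$-tracking through interpolation. The standard $L^2$ energy identity for Poisson's equation gives $\|T\|_{L^2\to L^2}\le C(\Omega)$, and the Calderón--Zygmund structure of the kernel yields a weak-type $(1,1)$ bound $\|T\|_{L^1\to L^{1,\infty}}\le C(\Omega)$, with $C(\Omega)$ \emph{independent} of $p$. Applying the Marcinkiewicz interpolation theorem with these two endpoints and carefully tracking the constant gives
\[
\|Tw\|_{L^p}\le \frac{C}{p-1}\|w\|_{L^p},\qquad 1<p\le 2.
\]
The adjoint $T^{*}$ has the same structural bounds, so by duality $\|Tw\|_{L^p}\le C(p-1)\|w\|_{L^p}\le Cp\,\|w\|_{L^p}$ for $2\le p<\infty$. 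Combining the two regimes,
\[
\|\nabla u\|_{L^p}\le C\max\!\Bigl(p,\tfrac{1}{p-1}\Bigr)\|w\|_{L^p}\le C\,\tfrac{p^2}{p-1}\|w\|_{L^p},
\]
since $p^2/(p-1)\ge\max(p,1/(p-1))$ for every $p>1$; the smoothing contribution of $R$ is trivially absorbed into $C(\Omega)$.

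The main obstacle is the quantitative $p$-dependence: most references for elliptic $L^p$ regularity do not record how the constant behaves as $p\to 1$ or $p\to\infty$, so one has to return to the Marcinkiewicz theorem (or to a Zygmund-type stopping-time argument directly) and verify that the constant near $p=1$ is indeed $O(1/(p-1))$, which then propagates by duality to $O(p)$ near $p=\infty$. A secondary, but purely technical, point is that on a bounded domain the Green's function is not translation invariant: one must check that the kernel $\partial^{2}_{xy}G(x,y)$ satisfies the Calderón--Zygmund size and gradient conditions with constants depending only on $\Omega$, which is a standard parametrix-plus-boundary-reflection construction.
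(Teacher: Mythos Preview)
The paper does not actually prove this lemma; it simply records it as a known inequality and refers to Yudovich \cite{Yudovich}. There is therefore no ``paper's proof'' to compare against.

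Your sketch is the standard route to this estimate and is essentially correct: recover $u$ from $w$ via the Dirichlet stream function $\psi$ (with the usual finite-dimensional harmonic correction in the multiply connected case), write $\nabla u$ as second derivatives of $\psi$, and use that the kernel $\partial^2 G(x,y)$ splits into a Calder\'on--Zygmund principal part plus a smoother boundary remainder. The $p$-tracking you give---weak $(1,1)$ plus $L^2$ boundedness, Marcinkiewicz to get $O(1/(p-1))$ for $1<p\le 2$, duality to get $O(p-1)\le O(p)$ for $p\ge 2$, and then the trivial majorization $\max(p,1/(p-1))\le p^2/(p-1)$---is exactly how the Yudovich constant is usually derived. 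The two technical points you flag (the explicit constant in Marcinkiewicz near $p=1$, and the verification that the reflected part of the Dirichlet Green's function on a $C^{2+\epsilon}$ domain contributes only a bounded operator with $\Omega$-dependent constant) are genuine but classical, and you have identified them correctly. In short, your argument supplies precisely the proof that the paper chose to omit.
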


\section{Proof of Main Theorem}

We first reformulate the initial-boundary value problem \eqref{BOUo}-\eqref{IBo}. Let $\bar p= p-\theta y$ and $\Theta=\theta-\bar \theta $, then we get from the original system
\begin{equation}\label{BOU}
\begin{cases}
\partial_t u+ u\cdot\nabla u +\nabla \bar p =  \Theta e_2,   \\
\partial_t \Theta+ u\cdot\nabla \Theta-\kappa\Delta \Theta = 0,   \\
\nabla \cdot u =0,
\end{cases}
\end{equation}
The initial and boundary conditions become
\begin{equation}\label{IB}
\begin{cases}
(u,\Theta)(x,0)=(u_{0},\Theta_{0})(x),\;x\in\Omega,\\
u(x,t)\cdot n|_{\partial\Omega}=0, \Theta(x,t)|_{\partial\Omega}=0,
\end{cases}
\end{equation}
where $\Theta_0=\theta_0-\bar \theta$. It is clear that \eqref{BOU}-\eqref{IB} are equivalent
to \eqref{BOUo}-\eqref{IBo}. Hence, for
the rest of this paper, we shall work on the reformulated problem \eqref{BOU}-\eqref{IB}.
\subsection{Existence}
First, we show the existence of weak solutions. Then, we improve the regularity using the maximal regularity for heat equation. In fact, one can establish the global existence of weak solution to \eqref{BOU}-\eqref{IB} in bounded domains
by a standard argument, see Zhao \cite{Zhao}.
\begin{lemma}\label{weak}
	Let $\Omega$ be a bounded domain in $\mathbb{R}^2$. Assume that $(u_0,\Theta_0)\in L^2\times L^2$. Then
	there exists one solution $(u,\Theta)$ to \eqref{BOU}-\eqref{IB} such that for any $T>0$
	\begin{enumerate}
		\item
		$u\in L^\infty (0,T;L^2)$, $\Theta\in L^\infty (0,T;L^2)\cap L^2(0,T;H^1)$.
		\item
		$\int_{\Omega}u_0\phi(x,0)dx+\int_0^T\int_{\Omega}\left(u\cdot \partial_t\phi+u\cdot\left(u\cdot \nabla \phi\right)+\Theta e_2\phi \right)dxdt=0$, for any vector function $\phi\in C_0^\infty(\Omega\times[0,T))$ satisfying
		$\nabla \cdot \psi=0$.
		\item
		$\int_{\Omega}\Theta_0\psi(x,0)dx+\int_0^T\int_{\Omega}\left(\Theta\cdot \partial_t\psi+\Theta u\cdot \nabla \psi-\nabla \Theta\cdot\nabla \psi\right)dxdt=0$, for any scalar function $\psi\in C_0^\infty(\Omega\times[0,T))$.
	\end{enumerate}
\end{lemma}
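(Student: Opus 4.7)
The strategy is a Galerkin approximation, uniform energy bounds, and Aubin--Lions compactness; I would essentially follow the scheme in Zhao \cite{Zhao}. First, let $\{w_k\}$ be an $L^2$-orthonormal basis of divergence-free vector fields tangent to $\partial\Omega$ (for instance Stokes eigenfunctions with the slip condition $u\cdot n|_{\partial\Omega}=0$) and $\{\varphi_k\}$ an $L^2$-orthonormal basis of Dirichlet eigenfunctions of $-\Delta$; seek $u_n=\sum_{k\le n}a_k(t)w_k$ and $\Theta_n=\sum_{k\le n}b_k(t)\varphi_k$ satisfying the system projected onto these modes, with $u_n(0)=P_n u_0$, $\Theta_n(0)=Q_n\Theta_0$. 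Cauchy--Lipschitz yields local (and, once the bounds below are in place, global) existence of the $a_k,b_k$.

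Next come the uniform estimates. Testing the $\Theta$-equation against $\Theta_n$ and using $\nabla\cdot u_n=0$ together with the homogeneous Dirichlet datum yields $\|\Theta_n\|_{L^\infty_T L^2}^2+2\kappa\|\nabla\Theta_n\|_{L^2_T L^2}^2\le \|\Theta_0\|_{L^2}^2$. Testing the velocity equation against $u_n$, the convective and pressure terms drop out via $u_n\cdot n|_{\partial\Omega}=0$ and $\nabla\cdot u_n=0$, leaving $\tfrac{d}{dt}\|u_n\|_{L^2}\le\|\Theta_n\|_{L^2}$, which integrated gives a uniform $L^\infty(0,T;L^2)$ bound on $u_n$. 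I would then extract a subsequence with $u_n\rightharpoonup u$ weakly-$*$ in $L^\infty(L^2)$ and $\Theta_n\rightharpoonup\Theta$ in $L^\infty(L^2)\cap L^2(H^1)$. From the $\Theta$-equation, $\partial_t\Theta_n$ is controlled in, say, $L^2(0,T;H^{-s})$ for some $s>0$ (the transport term $u_n\Theta_n$ sits in $L^2(L^r)$ for some $r>1$ thanks to $L^2H^1\hookrightarrow L^2 L^q$ Sobolev embedding), so Aubin--Lions upgrades $\Theta_n\to\Theta$ strongly in $L^2(0,T;L^2)$.

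With these convergences, all linear terms and the buoyancy $\Theta_n e_2$ pass by weak-$*$ convergence, and the transport $\Theta_n u_n$ appearing in the $\Theta$ weak formulation passes thanks to the strong convergence of $\Theta_n$ paired against $u_n$ weakly. The main obstacle is the quadratic velocity term $u_n\otimes u_n$ in the weak formulation for $u$: with only weak-$*$ convergence in $L^\infty(L^2)$ on $u_n$, strong compactness in $L^2(L^2)$ is not automatic, and one runs into the familiar difficulty for 2D Euler-type equations with energy-class data. I would resolve this either by a DiPerna--Majda concentration-compactness argument, or by first adding a vanishing viscosity $-\nu\Delta u$ to the approximate system to restore uniform-in-$n$ $L^2(H^1)$ compactness (and hence strong $L^2(L^2)$ convergence), and then sending $\nu\to 0$ using the 2D vorticity structure; this is exactly the route carried out in \cite{Zhao}, which I would invoke for the remaining details.
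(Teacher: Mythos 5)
The paper offers no proof of this lemma at all: it declares the existence of weak solutions to be obtainable ``by a standard argument'' and cites Zhao \cite{Zhao}, so there is no argument of the paper's to compare yours against. Your Galerkin/energy/Aubin--Lions outline is precisely the standard scheme being alluded to, your energy estimates for $\Theta_n$ and $u_n$ are correct, and the passage to the limit in the linear terms and in $\Theta_n u_n$ is handled correctly. More importantly, you have put your finger on the one step that is genuinely problematic, and your honesty there exposes a real gap in the lemma \emph{as stated}.

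With only $u_0\in L^2$ and no hypothesis on $\text{curl}\,u_0$, neither of your proposed fixes for the quadratic term $u_n\otimes u_n$ actually closes. The DiPerna--Majda concentration-compactness framework with bare finite-energy data yields only a measure-valued solution, not a distributional one; Delort's theorem, which does produce weak solutions, requires vortex-sheet data with vorticity a signed measure; and the vanishing-viscosity route needs a uniform-in-$\nu$ bound on $\nabla u_\nu$ (equivalently on the vorticity) to obtain strong $L^2_{t,x}$ compactness, which is unavailable from the energy estimate alone. Global existence of distributional solutions to the inviscid 2D velocity equation with data that is merely $L^2$ is not known, so Lemma \ref{weak} in the stated generality $(u_0,\Theta_0)\in L^2\times L^2$ should be regarded as unproven --- and the citation of \cite{Zhao}, which treats smooth ($H^3$-type) data, does not cover it. The lemma is rescued in the only place it is used, namely under the hypotheses of Theorem \ref{Th}: there $\text{curl}\,u_0\in L^\infty\subset L^2$, the vorticity equation $\partial_t w+u\cdot\nabla w=\partial_1\Theta$ together with the energy bound $\nabla\Theta\in L^2(0,T;L^2)$ gives a uniform $L^\infty(0,T;L^2)$ bound on the approximate vorticities (this is exactly the computation of Lemma \ref{le:3.2}, run at the approximate level), hence $u_n$ is bounded in $L^\infty(0,T;H^1)$ by Lemma \ref{le:2.4}; combined with a bound on $\partial_t u_n$ in a negative Sobolev space after Leray projection, Aubin--Lions then gives strong convergence of $u_n$ in $L^2(0,T;L^2)$ and the quadratic term passes. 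You should therefore either add the hypothesis $\text{curl}\,u_0\in L^2$ to the lemma or carry out the construction directly under the data assumptions of Theorem \ref{Th}; with that amendment your scheme is complete.
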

It remains to establish the global regularity of solutions obtained in Lemma \ref{weak}. The proof is divided into several lemmas.
\begin{lemma}\label{le:3.1}
Let the assumptions in Theorem \ref{Th} hold.
Then the solution obtained in Lemma \ref{weak} satisfies
\begin{equation}\label{le:3.1-1}
\Theta\in L^\infty(0,T;L^2)\cap L^2(0,T;H^1),
\end{equation}
\begin{equation}\label{le:3.1-2}
u\in L^\infty(0,T;L^2).
\end{equation}
\end{lemma}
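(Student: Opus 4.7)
\medskip

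The plan is to run two standard energy estimates in sequence: first on the temperature equation (which is decoupled from $u$ in the sense of $L^2$ energy), and then on the velocity equation, using the temperature bound as a forcing.

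First, I would observe that the hypotheses of Theorem~\ref{Th} give $\Theta_0\in L^2$. Indeed, $\theta_0\in B^{2-2/p}_{q,p}(\Omega)$ embeds into $L^q(\Omega)$, and since $q>2$ and $\Omega$ is bounded, $L^q(\Omega)\hookrightarrow L^2(\Omega)$; subtracting the constant $\bar\theta$ preserves $L^2$. Together with $u_0\in L^2$, this places us in the framework of Lemma~\ref{weak}.

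Next, I would test the $\Theta$-equation in \eqref{BOU} against $\Theta$ itself. Since $\nabla\cdot u=0$, $u\cdot n|_{\partial\Omega}=0$, and $\Theta|_{\partial\Omega}=0$, the convective term vanishes after integration by parts and the diffusion term yields $\kappa\|\nabla\Theta\|_{L^2}^2$, giving
\begin{equation*}
\frac{1}{2}\frac{d}{dt}\|\Theta\|_{L^2}^2+\kappa\|\nabla\Theta\|_{L^2}^2=0.
\end{equation*}
Integrating in time yields $\|\Theta(t)\|_{L^2}^2+2\kappa\int_0^t\|\nabla\Theta(s)\|_{L^2}^2\,ds=\|\Theta_0\|_{L^2}^2$, which, combined with the Poincaré inequality (valid because $\Theta$ vanishes on $\partial\Omega$), gives \eqref{le:3.1-1}.

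Finally, I would test the momentum equation against $u$. The pressure term drops thanks to $\nabla\cdot u=0$ and $u\cdot n|_{\partial\Omega}=0$, and the convective term $\int_\Omega (u\cdot\nabla u)\cdot u\,dx$ vanishes for the same reason. Using Cauchy--Schwarz on the buoyancy term gives
\begin{equation*}
\frac{1}{2}\frac{d}{dt}\|u\|_{L^2}^2=\int_\Omega \Theta\, u_2\,dx\leq \|\Theta\|_{L^2}\|u\|_{L^2},
\end{equation*}
so $\frac{d}{dt}\|u\|_{L^2}\leq\|\Theta\|_{L^2}\leq\|\Theta_0\|_{L^2}$ by the temperature bound just obtained. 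Integrating in time yields $\|u(t)\|_{L^2}\leq \|u_0\|_{L^2}+T\|\Theta_0\|_{L^2}$, establishing \eqref{le:3.1-2}.

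There is no serious obstacle here: both estimates are formal energy identities that are built into (or routinely justified by approximation within) the weak-solution framework of Lemma~\ref{weak}. The only point requiring a brief comment is the embedding $B^{2-2/p}_{q,p}\hookrightarrow L^2$ used to ensure $\Theta_0\in L^2$, so that Lemma~\ref{weak} actually applies to the reformulated problem.
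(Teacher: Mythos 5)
Your proposal is correct and follows essentially the same argument as the paper: an $L^2$ energy estimate on the temperature equation, followed by an $L^2$ estimate on the momentum equation using the temperature bound as forcing. The only addition is your (reasonable) remark justifying $\Theta_0\in L^2$ via the embedding $B^{2-2/p}_{q,p}\hookrightarrow L^q\hookrightarrow L^2$, which the paper leaves implicit.
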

\begin{proof} Multiplying \eqref{BOU}$_{2}$ by $T$ and  integrating it over $\Omega$ by parts, we find
\[\|\Theta\|_{L^{2}}^{2}+2\kappa\int_{0}^{T}\|\nabla \Theta\|_{L^{2}}^{2}dt\leq\|\Theta_{0}\|_{L^2}^{2}.\]

For second estimate, taking $L^2$ inner product
of \eqref{BOU}$_{1}$ with $u$, and  using H\"older's inequality, we get
\[\frac{1}{2}\frac{d}{dt}\|u\|_{L^2}^2=\int_{\Omega}\Theta e_2 \cdot udx\leq \|\Theta\|_{L^2}\|u\|_{L^2},\]
which, by the Cauchy-Schwarz inequality, gives that
\[\|u\|_{L^2}\leq \|u_0\|_{L^2}+\int_0^T\|\Theta\|_{L^2}ds\leq \|u_0\|_{L^2}+T\|\Theta_0\|_{L^2}.\]
Then the proof of Lemma \ref{le:3.1} is finished.
\end{proof}
\begin{lemma}\label{le:3.2}
Let the assumptions in Theorem \ref{Th} hold. Then
the solution obtained in Lemma \ref{weak} satisfies
\begin{equation}\label{le:3.2-2}
w\in L^\infty(0,T;L^2).
\end{equation}
\end{lemma}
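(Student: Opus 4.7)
The plan is to derive a vorticity equation and carry out a standard $L^2$ energy estimate, exploiting the bounded domain together with the impermeability condition $u\cdot n = 0$ on $\partial\Omega$ so that the transport term vanishes.

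First I would take the curl of the momentum equation \eqref{BOU}$_1$. In 2D, writing $w = \partial_1 u_2 - \partial_2 u_1$, this yields
\begin{equation*}
\partial_t w + u\cdot\nabla w = \partial_1 \Theta,
\end{equation*}
since $\text{curl}(\nabla \bar p) = 0$ and $\text{curl}(\Theta e_2) = \partial_1 \Theta$. The initial vorticity $w_0 = \text{curl}\, u_0$ lies in $L^\infty \subset L^2$ because $\Omega$ is bounded.

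Next I would multiply the vorticity equation by $w$ and integrate over $\Omega$. The transport term integrates to zero: using $\nabla\cdot u = 0$ and $u\cdot n|_{\partial\Omega} = 0$,
\begin{equation*}
\int_\Omega (u\cdot\nabla w)\, w\,dx = \tfrac{1}{2}\int_\Omega u\cdot\nabla(w^2)\,dx = -\tfrac{1}{2}\int_\Omega (\nabla\cdot u)\, w^2\,dx + \tfrac{1}{2}\int_{\partial\Omega} w^2 (u\cdot n)\,dS = 0.
\end{equation*}
For the forcing term I would use the Cauchy--Schwarz inequality, $\int_\Omega \partial_1\Theta\, w\,dx \le \|\nabla\Theta\|_{L^2}\|w\|_{L^2}$, obtaining
\begin{equation*}
\tfrac{1}{2}\tfrac{d}{dt}\|w\|_{L^2}^2 \le \|\nabla\Theta\|_{L^2}\|w\|_{L^2},
\end{equation*}
and hence $\tfrac{d}{dt}\|w\|_{L^2} \le \|\nabla\Theta\|_{L^2}$.

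Finally I would integrate in time and invoke Lemma \ref{le:3.1}, which gives $\Theta \in L^2(0,T;H^1)$. Cauchy--Schwarz in time then yields
\begin{equation*}
\|w(t)\|_{L^2} \le \|w_0\|_{L^2} + T^{1/2}\Bigl(\int_0^T \|\nabla\Theta\|_{L^2}^2\,ds\Bigr)^{1/2} \le C,
\end{equation*}
uniformly for $t\in[0,T]$, which is the desired bound. I do not anticipate a real obstacle here: the transport structure combined with the boundary condition $u\cdot n = 0$ makes the energy estimate essentially identical to the whole-space case, and the regularity $\Theta\in L^2(0,T;H^1)$ already established in Lemma \ref{le:3.1} is exactly what is needed to control the forcing.
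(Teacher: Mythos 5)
Your proof is correct and follows essentially the same route as the paper: derive the vorticity equation $\partial_t w + u\cdot\nabla w = \partial_1\Theta$, perform the $L^2$ energy estimate with the transport term vanishing by incompressibility and $u\cdot n|_{\partial\Omega}=0$, and close via Cauchy--Schwarz in time using the bound $\Theta\in L^2(0,T;H^1)$ from Lemma \ref{le:3.1}. The only difference is that you spell out the boundary integration by parts, which the paper leaves implicit.
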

\begin{proof} We recall that the vorticity $w=\text{curl}\; u$ satisfies the equation
\begin{equation}\label{3.2-1}
\partial_t w+u\cdot\nabla w=\partial _1 \Theta.
\end{equation}
Multiplying \eqref{3.2-1} by $w$, integrating the resulting equations over $\Omega$ by parts, and using H\"older's inequality, we have
\[\frac{1}{2}\frac{d}{dt}\|w\|_{L^2}^2=\int_{\Omega}\partial_1\Theta wdx\leq \|\nabla \Theta\|_{L^2}\|w\|_{L^2},\]
which implies that
\begin{align*}
\|w\|_{L^2}
&\leq \|w_0\|_{L^2}+\int_0^T\|\nabla \Theta\|_{L^2}ds\\
&\leq \|w_0\|_{L^2}+T^{1/2}\left(\int_0^T\|\nabla \Theta\|_{L^2}^2ds\right)^{1/2}\\
&\leq \|w_0\|_{L^2}+\frac{T^{1/2}}{\sqrt{2\kappa}}\|\Theta_0\|_{L^2}.
\end{align*}
Then the
proof of Lemma \ref{le:3.2} is finished.
\end{proof}
\begin{lemma}\label{le:3.3}
Let the assumptions in Theorem \ref{Th} hold. Then
the solution obtained in Lemma \ref{weak} satisfies
\begin{gather*}
\Theta\in C^\infty([0,T];B_{q,p}^{2-2/p})\cap L^{p}(0,T;W^{2,q})\,,
 \partial_t \Theta \in  L^{p}(0,T;L^{q}),\\
u \in L^\infty(0,T;L^2) \text{ and }\text{curl}\, u\in L^\infty(0,T;L^\infty).
\end{gather*}
\end{lemma}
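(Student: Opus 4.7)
The plan is to apply the heat-equation maximal regularity of Lemma \ref{le:2.2} to $\partial_t\Theta - \kappa\Delta\Theta = -u\cdot\nabla\Theta$ with homogeneous Dirichlet data, treating the convection term as a source. Setting $X(T):=\|\Theta\|_{L^p(0,T;W^{2,q})}$, Lemma \ref{le:2.2} delivers not only $X(T)$ but also $\|\Theta\|_{L^\infty(0,T;B_{q,p}^{2-2/p})}$, the continuity in that space, and $\|\partial_t\Theta\|_{L^p(0,T;L^q)}$ as soon as $\|u\cdot\nabla\Theta\|_{L^p(0,T;L^q)}$ is under control, so the entire lemma reduces to an a priori bound on $X(T)$ in terms of $T$ and the data only.

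For the velocity, the vorticity equation $\partial_t\omega + u\cdot\nabla\omega = \partial_1\Theta$ together with $\nabla\cdot u=0$ yields, by an $L^\infty$-transport estimate,
\[\|\omega(t)\|_{L^\infty} \leq \|\omega_0\|_{L^\infty} + \int_0^t\|\nabla\Theta(s)\|_{L^\infty}\,ds.\]
Combining the div-curl/Yudovich estimate of Lemma \ref{le:2.4} (applied for some $r>2$), the 2D Sobolev embedding and the $L^2$-bound on $u$ from Lemma \ref{le:3.1} gives $\|u\|_{L^\infty}\leq C(1+\|\omega\|_{L^\infty})$. The Gagliardo--Nirenberg inequality of Lemma \ref{embed}(2) applied to $\Theta$ with $\alpha:=\frac{2q}{3q-2}<1$, together with the uniform $L^2$-bound on $\Theta$ (Lemma \ref{le:3.1}) and H\"older in time, yields
\[\int_0^T\|\nabla\Theta\|_{L^\infty}\,dt \leq CT^{1-\alpha/p}X(T)^\alpha + CT,\]
and a parallel Gagliardo--Nirenberg with exponent $\gamma:=\frac{2q-2}{3q-2}<1$ gives $\|\nabla\Theta\|_{L^p(0,T;L^q)}\leq CT^{(1-\gamma)/p}X(T)^\gamma+CT^{1/p}$. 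Using $\|u\cdot\nabla\Theta\|_{L^q}\leq\|u\|_{L^\infty}\|\nabla\Theta\|_{L^q}$ and inserting the three bounds into Lemma \ref{le:2.2} produces the implicit inequality
\[X(T) \leq C_0 + C_0\bigl(1+T^{1-\alpha/p}X(T)^\alpha\bigr)\bigl(T^{(1-\gamma)/p}X(T)^\gamma + T^{1/p}\bigr),\]
with $C_0$ depending only on $T$, $\kappa$, $\Omega$, $\|u_0\|_{L^2}$, $\|\omega_0\|_{L^\infty}$ and $\|\Theta_0\|_{B_{q,p}^{2-2/p}}$.

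The main obstacle is the cross-term $X(T)^{\alpha+\gamma}$ in the expansion, since $\alpha+\gamma=\frac{4q-2}{3q-2}>1$, so Young's inequality alone cannot absorb it into the left-hand side. I would close the estimate by a short-time/continuation argument: there is a $\tau>0$ depending only on $C_0$ such that on $[0,\tau]$ the coefficient of $X^{\alpha+\gamma}$ is small enough to confine $X(\tau)$ to a uniformly bounded branch of the implicit inequality. Since at time $\tau$ the Besov norm of $\Theta$ (from Lemma \ref{le:2.2}), the $L^2$-norm of $u$ (from Lemma \ref{le:3.1}) and the $L^\infty$-norm of $\omega$ (from the transport bound above) are still controlled, one restarts at $t=\tau,2\tau,\ldots$ and reaches any prescribed $T$ in finitely many steps. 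To make the argument fully rigorous I would first smooth-approximate $\Theta_0$ so that Huang's theorem \cite{Huang} provides smooth solutions for which $X(T)<\infty$ is available a priori, derive the above uniform bound, and then pass to the limit. The continuity $\Theta\in C([0,T];B_{q,p}^{2-2/p})$ is part of Lemma \ref{le:2.2}, while $u\in L^\infty(0,T;L^2)$ and $\omega\in L^\infty(0,T;L^\infty)$ have been produced during the argument.
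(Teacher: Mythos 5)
There is a genuine gap, and it stems from the way you split the convection term. You estimate $\|u\cdot\nabla\Theta\|_{L^q}\leq\|u\|_{L^\infty}\|\nabla\Theta\|_{L^q}$, which forces you to control $\|u\|_{L^\infty}$ through $\|\omega\|_{L^\infty}$, which in turn is controlled by $\int_0^t\|\nabla\Theta\|_{L^\infty}$; this feeds the unknown $X(T)$ back into its own coefficient and produces the superlinear term $X^{\alpha+\gamma}$ with $\alpha+\gamma>1$ that you correctly identify as unabsorbable. The short-time/continuation argument you offer does not repair this for a \emph{global} result: at each restart the constant $C_0$ must be updated using $\|\omega(k\tau)\|_{L^\infty}$ and $\|\Theta(k\tau)\|_{B^{2-2/p}_{q,p}}$, which grow from step to step, so the admissible step length $\tau_k$ shrinks, and nothing in your argument rules out $\sum_k\tau_k<T$. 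This is precisely the structure of a local-existence iteration, not a global a priori bound; to reach arbitrary $T$ you would need a monotone or conserved quantity keeping the restarted constants uniformly bounded, which you do not supply. (The claim that one reaches $T$ ``in finitely many steps'' tacitly assumes a uniform step size, which is false here.)

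The paper closes the estimate by choosing the H\"older exponents the other way: $\|u\cdot\nabla\Theta\|_{L^q}\leq\|u\|_{L^q}\|\nabla\Theta\|_{L^\infty}$. The point is that $\|u\|_{L^\infty(0,T;L^q)}$ is \emph{already} bounded a priori, independently of any high-order norm of $\Theta$: Lemma \ref{le:3.2} gives $\omega\in L^\infty(0,T;L^2)$ using only the energy bound $\nabla\Theta\in L^2(0,T;L^2)$, and then Lemma \ref{le:2.4} plus the Sobolev embedding $H^1\hookrightarrow L^q$ give $u\in L^\infty(0,T;L^q)$ for all $q<\infty$. With that factor frozen, the Gagliardo--Nirenberg inequality in the Young form $\|\nabla\Theta\|_{L^\infty}\leq\epsilon\|\nabla^2\Theta\|_{L^q}+C(\epsilon)\|\Theta\|_{L^2}$ makes the right-hand side of the maximal regularity estimate \emph{linear} in $\|\Theta\|_{L^p(0,T;W^{2,q})}$ with a small coefficient, so it is absorbed in one stroke on any time interval, with no continuation argument and no approximation by smooth data. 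The $L^\infty$ bound on $\omega$ is then read off at the very end from the transport equation and $\nabla\Theta\in L^1(0,T;L^\infty)$, rather than being used as an input. Your treatment of the velocity and of the vorticity transport is otherwise consistent with the paper; the decisive missing idea is this decoupling of the $L^q$ bound on $u$ from the second-order temperature norm.
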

\begin{proof}
First, we obtain from \eqref{le:3.2-2} and Lemma \ref{le:2.4} that
\[\nabla u\in L^\infty(0,T;L^2),\]\
which implies that for $2\leq q <\infty$,
\begin{equation}\label{3.2-2}
u\in  L^\infty(0,T;L^q).
\end{equation}

Considering the equation for the temperature, by the maximal regularity for heat equation and H\"{o}lder's inequality, we obtain that for $1<p<\infty$, $2<q<\infty$,
\begin{equation}\label{3.2-3}
\begin{aligned}
&\quad\|\Theta\|_{L^\infty(0,T;B_{q,p}^{2-2/p})}+\|\Theta\|_{L^p(0,T; W^{2,q})}+\|\partial_t \Theta\|_{L^p(0,T; L^q)}\\
&\leq C\|\Theta_0\|_{B_{q,p}^{2-2/p}}+C \|u\cdot\nabla \Theta\|_{L^p(0,T;L^q)}\\
&\leq C\|\Theta_0\|_{B_{q,p}^{2-2/p}}+C \|u\|_{L^\infty(0,T;L^q)}\|\nabla \Theta\|_{L^p(0,T;L^\infty)}\\
&\leq C\|\Theta_0\|_{B_{q,p}^{2-2/p}}+C\|\nabla \Theta\|_{L^p(0,T;L^\infty)}.
\end{aligned}
\end{equation}
Using the interpolation inequality in Lemma \ref{embed}, H\"{o}lder's inequality and Young's inequality, we have for arbitrary $\epsilon >0$, $q>2$,
\[\|\nabla \Theta\|_{L^p(0,T;L^\infty)}\leq \epsilon \|\nabla^2 \Theta\|_{L^p(0,T;L^q)}+C(\epsilon)\|\Theta\|_{L^p(0,T;L^2)}.\]
Plugging the above inequality into \eqref{3.2-3},  absorbing the small $\epsilon$ term, we get
\[\|\Theta\|_{L^\infty(0,T;B_{q,p}^{2-2/p})}+\|\Theta\|_{L^p(0,T; W^{2,q})}+\|\partial_t \Theta\|_{L^p(0,T; L^q)}\leq C(p,q,\kappa,T,\Omega,u_0,\Theta_0),\]
which yields that
\[\nabla \Theta \in L^1(0,T;L^\infty).\]

Coming back to the vorticity equation \eqref{3.2-1}, we derive that
\[w\in L^\infty(0,T;L^\infty).\]
Then the proof of Lemma \ref{le:3.3} is finished.
\end{proof}
\subsection{Uniquenss}
The method adapted here is essentiallly due to Yudovich \cite{Yudovich}, see also Danchin \cite{Danchin}.

Let $(u_1,\Theta_1,\bar p_1)$ and $(u_2, \Theta_2,\bar p_2)$ be two solutions of the system \eqref{BOU}-\eqref{IB}. Denote $\delta u=u_1-u_2$, $\delta \Theta=\Theta_1-\Theta_2$, and $\delta p=\bar p_1-\bar p_2$. Then $(
\delta u, \delta \Theta,\delta p)$ satisfy
\begin{equation}
\begin{cases}
\partial_t \delta u+ u_2\cdot\nabla \delta u +\nabla \delta p =-\delta u\cdot \nabla u_1+  \delta \Theta e_2,   \\
\partial_t\delta \Theta -\kappa \Delta \delta \Theta = -u_2\cdot\nabla\delta \Theta-\delta u\cdot\nabla \Theta_1 ,   \\
\nabla \cdot \delta u =0, \\
\delta u(x,t)\cdot n|_{\partial\Omega}=0, \delta \Theta(x,t)|_{\partial\Omega}=0,\\
\delta u(x,0) =0, \quad \delta \Theta(x,0) =0.
\end{cases}
\end{equation}
By standard energy method and H\"{o}lder's inequality, we have for all
$r\in [2,\infty)$
\begin{align*}
\frac{1}{2}\frac{d}{dt}\|\delta u\|_{L^2}^2
&\leq \|\nabla u_1\|_{L^r}\|\delta u\|_{L^{2r'}}^2+\|\delta \Theta \|_{L^2}\|\delta u\|_{L^2}\\
&\leq \|\nabla u_1\|_{L^r}\|\delta u\|_{L^{\infty}}^{2/r}\|\delta u\|_{L^2}^{2/r'}+\|\delta \Theta \|_{L^2}\|\delta u\|_{L^2},\\
\intertext{and}
\frac{1}{2}\frac{d}{dt}\|\delta \Theta\|_{L^2}^2
&\leq \|\nabla \Theta_1\|_{L^\infty}\|\delta \Theta\|_{L^2}\|\delta u\|_{L^2},
\end{align*}
where $1/r+1/r'=1$.

Denoting $X(t)=\|\delta u\|_{L^2}^2+\|\delta \Theta\|_{L^2}^2$,
we find that
\[\frac{1}{2}\frac{d}{dt}X
\leq \|\nabla u_1\|_{L^r}\|\delta u\|_{L^{\infty}}^{2/r}X^{1/r'}+\frac{1}{2}(1+\|\nabla \Theta_1\|_{L^\infty})X.\]
Setting $Y=e^{-\int_0^t\left(1+\|\nabla \Theta_1\|_{L^\infty}\right)ds}X$, we deduce that
\begin{align*}
\frac{1}{r}Y^{-\frac{1}{r'}}\frac{d}{dt}Y
&\leq 2e^{-\frac{1}{r}\int_0^t\left(1+\|\nabla \Theta_1\|_{L^\infty}\right)ds}\|\nabla u_1\|_{L^r}\|\delta u\|_{L^\infty}^{2/r}\\
&\leq \frac{2}{r}\|\nabla u_1\|_{L^r}\|\delta u\|_{L^\infty}^{2/r}
\end{align*}
Integrating in time on $[0,t]$ gives us that
\begin{equation}\label{eqY}
Y(t)\leq \left(2\int_0^t\frac{\|\nabla u_1\|_{L^r}}{r}\|\delta u\|_{L^\infty}^{2/r}ds\right)^r.
\end{equation}

To proceed, we make two simple observations. First, combing  Lemma \ref{le:2.4} and the bound $w_1\in L^\infty(0,T;L^\infty)$, we deduce that
\begin{equation}
\sup_{1\leq r<\infty}\frac{\|\nabla u_1(t)\|_{L^r}}{r}\leq C(\Omega).
\end{equation}
Second, from the fact that $u_i\in L^\infty(0,T; L^2)$ and  $w_i\in L^\infty(0,T; L^\infty)$ for $i=1,2$, we have
\begin{equation}\label{bounddu}
\delta u\in L^\infty(0,T;L^\infty).
\end{equation}

Next, choosing
$T^*$ such that $\int_0^{T^*}\frac{\|\nabla u_1\|_{L^r}}{r}\leq 1/4$, together with \eqref{bounddu}, we can rewrite \eqref{eqY} as
\[Y(t)\leq C\left(\frac{1}{2}\right)^r.\]
Sending $r$ to $\infty$, we get $Y(t)\equiv 0$
on $[0,T^*]$. By standard induction argument, we conclude that $Y(t)\equiv 0$ for all $t>0$, which means that
$(\delta u, \delta T,\delta p)\equiv 0$ for all $t\in [0,T]$. Thus we obtain the uniqueness of solutions.

 \

 \noindent{\bf\small Acknowledgment.}

{\small D.G. Zhou is supported by the National Natural Science Foundation of China (No. 11401176) and Doctor Fund of Henan Polytechnic University (No. B2012-110). Z.L. Li is supported by Doctor Fund of Henan Polytechnic University (No. B2016-57), the Fundamental Research Funds for the Universities of Henan Province (NSFRF 16A110015) and the National Natural Science Foundation of China (No. 11601128).}

\bibliographystyle{amsplain}

\end{document}